\newtheorem{theorem}{Theorem}
\newtheorem{lemma}[theorem]{Lemma}
\newtheorem{conjecture}{Conjecture}
\renewcommand{\theconjecture}{\Alph{conjecture}}
\definecolor{linkB}{rgb}{0.00 0.2 0.48}
\newcommand{\lemmaOne}{\hyperref[lemma:1]{Lemma 1}}
\newcommand{\lemmaTwo}{\hyperref[remark:1]{Remark 1}}
\title{Hoffmann-Ostenhof's conjecture for traceable cubic graphs}
\newcommand*{\affaddr}[1]{\centering{\small \em #1}} 
\newcommand*{\affmark}[1][*]{\textsuperscript{#1}}
\thanks{s\_akbari@sharif.edu, \{abdolhosseini, hohashemi, sadramoradian\}@ce.sharif.edu}
\begin{document}

\maketitle

\vspace{-4mm}
{\centering F. Abdolhosseini\affmark[a], S. Akbari\affmark[b], H. Hashemi\affmark[a], M.S. Moradian\affmark[a]\\
}
\vspace{6mm}
{\affaddr{\affmark[a]Department of Computer Engineering, Sharif University of Technology, Tehran, Iran}\\
}
{\affaddr{\affmark[b]Department of Mathematical Sciences, Sharif University of Technology, Tehran, Iran}\\
}


\begin{abstract}
It was conjectured by Hoffmann-Ostenhof that the edge set of every connected cubic graph can be
decomposed into a spanning tree, a matching and a family of cycles. In this paper, we show that this conjecture holds for traceable cubic graphs.\\

\noindent \textsc{keywords:} Cubic graph, Hoffmann-Ostenhof's Conjecture, Traceable\\
\noindent \textsc{AMS Subject Classification:}
05C45,
05C70
\end{abstract}



\section{Introduction}
Let $G$ be a simple undirected graph with the \textit{vertex set} $V(G)$ and the \textit{edge set} $E(G)$. A vertex with degree one is called a \textit{pendant vertex}. The distance between the vertices $u$ and $v$ in graph $G$ is denoted by $d_G(u,v)$. A cycle $C$ is called \textit{chordless} if $C$ has no \textit{cycle chord} (that is an edge not in the edge set of $C$ whose endpoints lie on the vertices of $C$).
The \textit{Induced subgraph} on vertex set $S$ is denoted by $\langle S\rangle$. A path that starts in $v$ and ends in $u$ is denoted by $\stackrel\frown{v u}$.
A \textit{traceable} graph is a graph that possesses a Hamiltonian path.
In a graph $G$, we say that a cycle $C$ is \textit{formed by the path} $Q$ if $ | E(C) \setminus E(Q) | = 1 $. So every vertex of $C$ belongs to $V(Q)$.

In 2011 the following conjecture was proposed:
\begin{conjecture}(Hoffmann-Ostenhof \cite{hoffman})
Let $G$ be a connected cubic graph. Then $G$ has a decomposition into a spanning tree, a matching and a family of cycles.

\end{conjecture}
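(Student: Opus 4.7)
The plan is to reformulate the conjecture as a spanning-tree selection problem, then attack it by induction on $|V(G)|$, reducing the essential difficulty to the bridgeless case.

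The reformulation is local. In any decomposition $E(G)=E(T)\sqcup M\sqcup \bigsqcup_i E(C_i)$, cubicity forces the tree-degree at every vertex $v$ to satisfy $\deg_T(v)\in\{1,2,3\}$: if $\deg_T(v)=1$ then both co-tree edges at $v$ must lie in cycles; if $\deg_T(v)=2$ the unique co-tree edge at $v$ must lie in the matching; if $\deg_T(v)=3$ there is nothing more to assign. Hence the conjecture is equivalent to producing a spanning tree $T$ with the property that no co-tree edge joins a leaf of $T$ to a degree-$2$ vertex of $T$. When such a tree exists, the co-tree automatically splits into a $2$-regular subgraph on the leaves (the desired cycles) together with a perfect matching on the degree-$2$ vertices, and the decomposition can be read off.

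With this criterion in hand, I would proceed by strong induction on $|V(G)|$. Small cubic graphs form the base cases, checked directly. For the inductive step I would dispose of low edge-connectivity first: if $G$ has a bridge $e=uv$, I would place $e$ in the matching, delete $e$, and for each side suppress the resulting degree-$2$ vertex to obtain a smaller cubic graph where the hypothesis applies; analogous reductions work along $2$-edge cuts and suitable $3$-edge cuts. For bridgeless $G$, Petersen's theorem supplies a perfect matching $M^{\ast}$ so that $\mathcal{F}=G-M^{\ast}$ is a $2$-factor. From $\mathcal{F}$ I would build a candidate spanning tree by opening each cycle of $\mathcal{F}$ at one carefully chosen edge and adding enough edges of $M^{\ast}$, then adjust the choices by local tree/co-tree swaps until the leaf/degree-$2$ constraint above is met.

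The principal obstacle is this last step. The $2$-factor $\mathcal{F}$ does not canonically respect any leaf/degree-$2$ partition, and on high-girth cubic graphs or on snarks I do not expect naive swap arguments to terminate against an obvious potential function, so substantially finer control of the cycle-opening process will be required. A natural first milestone, and one that already illustrates the spirit of the plan, is the case of traceable cubic graphs: a Hamiltonian path $P$ directly furnishes a candidate spanning tree whose leaf and degree-$2$ sets are transparent, and the absence of bad chords can be arranged by localized modifications to $P$. Extending this from the traceable case to the full conjecture will likely require global structural input going beyond what a Hamiltonian path provides, and I view this as the main place where the present approach would need to be strengthened.
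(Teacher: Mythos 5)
You have not given a proof, and no review could certify one: the statement you are addressing is Conjecture~A, which is \emph{open}. The paper itself does not prove it either --- it only proves the special case of traceable cubic graphs --- so the only fair comparison is between your plan and that partial result. Your opening reformulation is correct and is worth keeping: in any such decomposition a vertex with $\deg_T(v)=1$ must have both co-tree edges on cycles, a vertex with $\deg_T(v)=2$ must have its co-tree edge in the matching, and since degree-$3$ tree vertices have no co-tree edges, the conjecture is indeed equivalent to finding a spanning tree in which no co-tree edge joins a leaf to a degree-$2$ vertex (the leaf--leaf co-tree edges are then $2$-regular and give the cycles, the rest give the matching). But everything after that is deferred rather than argued. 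The decisive step --- ``adjust the choices by local tree/co-tree swaps until the leaf/degree-$2$ constraint is met'' --- is exactly the conjecture restated in your own language; you supply no invariant, no potential function, and no termination argument, and you candidly concede that you do not expect naive swaps to terminate on snarks or high-girth graphs. That concession is the gap, and it sits at the center of the proposal.

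Two further concrete points. First, even your ``easy'' reduction is flawed as stated: if $e=uv$ is a bridge placed in the matching and you suppress the resulting degree-$2$ vertices, the inductive decomposition of the smaller graph may place the suppressed edge $ab$ in \emph{its} matching; lifting back, the subdivided path $a\,u\,b$ cannot be assigned, since $u$ already carries the matching edge $e$, forcing both $au$ and $ub$ into the tree, which closes a cycle with the existing tree path from $a$ to $b$. So the induction hypothesis must be strengthened (e.g., to allow prescribing that a given edge lies in the tree or on a cycle); ``analogous reductions work along $2$-edge cuts'' hides the same difficulty. Second, your ``first milestone'' paragraph --- the traceable case --- is precisely what the paper proves, and the clause ``the absence of bad chords can be arranged by localized modifications to $P$'' compresses what is in fact the paper's entire content: a sufficient condition (its Lemma~1, a partition of $V(G)$ into a tree and cycles with no edges between cycles and every pendant vertex of the tree adjacent to a cycle), the observation that a cycle formed by a subpath can be taken chordless (Remark~1), and a lengthy two-case, multi-subcase analysis of the chords at the two ends of the Hamiltonian path, with minimality choices ensuring no edge joins the selected cycles. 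None of that is localized or routine, so even the milestone would need to be earned, not asserted.
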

Conjecture \theconjecture$\,$ also appears in Problem 516 \cite{cameron}. There are a few partial results known for Conjecture \theconjecture. Kostochka \cite{kostocha} noticed that the Petersen graph, the prisms over cycles, and many other graphs have a decomposition desired in Conjecture \theconjecture. Ozeki and Ye \cite{ozeki} proved that the conjecture holds for 3-connected cubic plane graphs. Furthermore, it was proved by Bachstein \cite{bachstein} that Conjecture \theconjecture$\,$ is true for every 3-connected cubic graph embedded in torus or Klein-bottle. Akbari, Jensen and Siggers \cite[Theorem 9]{akbari} showed that Conjecture \theconjecture$\,$ is true for Hamiltonian cubic graphs.

In this paper, we show that Conjecture \theconjecture$\,$ holds for traceable cubic graphs.
\section{Results}
Before proving the main result, we need the following lemma.
\begin{lemma}
\label{lemma:1}
Let $G$ be a cubic graph. Suppose that $V(G)$ can be partitioned into a tree $T$ and finitely many cycles such that there is no edge between any pair of cycles (not necessarily distinct cycles), and every pendant vertex of $T$ is adjacent to at least one vertex of a cycle. Then, Conjecture \theconjecture$\,$ holds for $G$.
\end{lemma}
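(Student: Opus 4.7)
The plan is to take the decomposition essentially from the hypothesis itself: let the spanning tree be $T$ together with all edges joining a cycle vertex to $V(T)$, let the cycle family be the given $C_1,\ldots,C_k$, and let the matching consist of whatever edges remain inside the induced subgraph on $V(T)$.

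First I would pin down what happens at a cycle vertex. Since $G$ is cubic, each $v\in V(C_i)$ already uses two edges on $C_i$; because $C_i$ is chordless (the ``not necessarily distinct'' half of the hypothesis) and receives no edges from any other cycle, its third edge must land in $V(T)$. Letting $F$ be the set of edges between the cycles and $V(T)$, this gives $|F|=\sum_i |V(C_i)|=|V(G)|-|V(T)|$, and it also shows that $T^{*}:=T\cup F$ is connected, since every cycle vertex is one $F$-edge away from the connected tree $T$. A quick count then yields $|E(T^{*})|=(|V(T)|-1)+(|V(G)|-|V(T)|)=|V(G)|-1$, so $T^{*}$ is a spanning tree.

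The remaining edges split into two natural pieces: the edges inside each cycle $C_i$ (which are exactly the cycles of our family), and a set $H$ of edges that lie entirely inside $V(T)$ but do not belong to $T$. The main step — and the one place the pendant hypothesis is actually used — is to prove $H$ is a matching. For $v\in V(T)$, cubicness gives $d_T(v)+d_F(v)+d_H(v)=3$. When $d_T(v)\ge 2$, this forces $d_H(v)\le 1$ for free. The only delicate case is a pendant of $T$, where $d_T(v)=1$ a priori allows $d_H(v)=2$; here the hypothesis that every pendant is adjacent to some cycle vertex forces $d_F(v)\ge 1$ and hence $d_H(v)\le 1$. So $H$ is a matching, and $E(G)=E(T^{*})\sqcup E(H)\sqcup \bigsqcup_i E(C_i)$ is the desired decomposition. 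I do not expect any real obstacle here: the whole argument is a degree count around $V(T)$, and the pendant-adjacency assumption is precisely what rules out a pendant of $T$ meeting two non-tree edges both inside $V(T)$.
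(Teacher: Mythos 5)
Your proposal is correct and follows essentially the same route as the paper: the spanning tree is $T$ together with all cycle-to-tree edges, the cycles are kept as given, and the leftover edges form the matching, with the pendant hypothesis used exactly where the paper uses it (to bound the non-tree degree at leaves of $T$). Your version just makes the edge count for the spanning tree and the degree argument for the matching more explicit than the paper's ``clearly.''
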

\begin{proof}
By assumption, every vertex of each cycle in the partition is adjacent to exactly one vertex of $T$. Call the set of all edges with one endpoint in a cycle and another endpoint in $T$ by $Q$.
Clearly, the induced subgraph on $E(T) \cup Q$ is a spanning tree of $G$. We call it $T'$. Note that every edge between a pendant vertex of $T$ and the union of cycles in the partition is also contained in $T'$. Thus, every pendant vertex of $T'$ is contained in a cycle of the partition. Now, consider the graph $H = G \setminus E(T')$. For every $v \in V(T)$, $d_H(v) \leq 1$. So Conjecture \theconjecture$\,$ holds for $G$. \vspace{1em}
\end{proof}

\noindent\textbf{Remark 1.}
\label{remark:1}
Let $C$ be a cycle formed by the path $Q$. Then clearly there exists a chordless cycle formed by $Q$.

Now, we are in a position to prove the main result.

\begin{theorem}
Conjecture \theconjecture$\,$ holds for traceable cubic graphs.
\end{theorem}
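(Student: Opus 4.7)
The plan is to construct, from a Hamiltonian path of $G$, a partition of $V(G)$ into a tree and chordless cycles satisfying the hypotheses of \lemmaOne. Let $P \colon v_1 v_2 \cdots v_n$ be a Hamiltonian path of $G$, chosen extremally --- say, maximizing the sum of the larger non-path-neighbor indices at its two endpoints. If $v_1 v_n \in E(G)$, then $G$ has a Hamiltonian cycle and the conclusion follows from Theorem 9 of \cite{akbari}. So assume $v_1 v_n \notin E(G)$, and let $v_\alpha, v_\beta$ (with $\alpha < \beta$) and $v_\gamma, v_\delta$ (with $\gamma < \delta$) denote the non-path neighbors of $v_n$ and $v_1$ respectively; then $2 \le \alpha < \beta \le n-2$ and $3 \le \gamma < \delta \le n-1$.

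First I consider the cycle $C_0 = v_\beta v_{\beta+1} \cdots v_n v_\beta$ formed by $P$ via the chord $v_\beta v_n$. If $C_0$ is chordless, take $C = C_0$ with $V(C) = \{v_\beta, \ldots, v_n\}$ and set $T = v_1 v_2 \cdots v_{\beta-1}$, which is a path (hence a tree). Otherwise, since by the choice of $\beta$ no chord of $C_0$ is incident to $v_n$, \lemmaTwo{} yields a chordless cycle $C$ with $V(C) = \{v_p, \ldots, v_q\} \subseteq \{v_\beta, \ldots, v_{n-1}\}$, and I use the chord $v_\alpha v_n$ --- which joins the prefix $\{v_1, \ldots, v_{p-1}\}$ to the suffix $\{v_{q+1}, \ldots, v_n\}$, since $\alpha < \beta \le p$ --- to combine the two path pieces into a single tree $T$ on $V(G) \setminus V(C)$. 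In both cases, the pendants of $T$ other than $v_1$ are adjacent to $V(C)$ via path edges of $P$, so the hypotheses of \lemmaOne{} hold as soon as $v_1$ has some neighbor in $V(C)$, i.e.\ $\delta \ge p$.

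The remaining case is $\delta < p$, in which $v_1$ has no neighbor in $V(C)$. Here I perform the analogous construction at the $v_1$-end: form the cycle $v_1 v_2 \cdots v_\gamma v_1$ (using the chord $v_1 v_\gamma$, which is a genuine chord of this cycle because $\gamma < \delta$) and, if necessary, extract via \lemmaTwo{} a chordless cycle $C'$ with $V(C') \subseteq \{v_1, \ldots, v_\gamma\}$ containing $v_1$; together with $C$, the cycles $C, C'$ and the middle segment of $P$ (augmented by a chord if needed, exactly as above) form the required partition. The main obstacle I foresee is verifying the \lemmaOne{} hypothesis that \emph{no edge of $G$ connects $V(C)$ to $V(C')$}: this is where the extremal choice of $P$ is decisive, since any such connecting chord would, by a standard rotation of $P$, produce a Hamiltonian path with a strictly larger value of $\beta$ or $\delta$, contradicting extremality. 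A secondary technicality is ensuring that the chordless cycle extracted near $v_1$ actually contains $v_1$ (so that $v_1$ is absorbed into a cycle rather than left as a bad pendant of $T$); if a direct application of \lemmaTwo{} fails to accomplish this, a further rotation of $P$ at the $v_1$-end remedies it.
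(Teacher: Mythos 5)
Your opening construction is sound as far as it goes: extracting a chordless cycle $C=v_p\cdots v_qv_p$ from the cycle $v_\beta\cdots v_nv_\beta$ (with $q\le n-1$) and joining the two surviving segments of $P$ by the chord $v_\alpha v_n$ does give a tree whose pendants other than $v_1$ see $V(C)$, so \lemmaOne{} applies whenever $v_1$ also has a neighbour in $\{v_p,\dots,v_q\}$ (note that this condition is $\gamma\in[p,q]$ or $\delta\in[p,q]$, not merely $\delta\ge p$). But all of the difficulty of the theorem is concentrated in your ``remaining case'', and there the proposal has genuine gaps. First, after deleting two cycles $C\subseteq\{v_p,\dots,v_q\}$ and $C'\subseteq\{v_{p'},\dots,v_{q'}\}$, up to three segments of $P$ survive, and you must exhibit chords reconnecting them into a single tree; the chord $v_\alpha v_n$ may have its left endpoint inside $C'$ (this can happen when $\alpha<\gamma$, i.e.\ in the interleaved configuration $i<j$ that the paper isolates as its Case 1), in which case it connects nothing. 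You never verify that $T$ is connected.

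Second, and more seriously, the claim that an edge joining $V(C)$ to $V(C')$ would ``by a standard rotation'' increase $\beta$ or $\delta$ is unsubstantiated and, I believe, false: such an edge $v_xv_y$ is incident to neither endpoint of $P$, and rotations only exploit chords at an endpoint. This is exactly the hard case of the theorem, and the paper cannot avoid it either: when every chordless cycle formed by one end-segment sends an edge to the other side, it abandons the two-cycle plan and instead builds a single cycle from a shortest zig-zag path $w_0w_{i_1}\cdots w_l$ through the chords of that segment, reattaching the leftover subpaths $Q_1,\dots,Q_q$ to the tree by the very edges $e_{t_i}$ that obstructed chordlessness; in the complementary situation it obtains disjointness of the two cycles not from a rotation but from choosing the crossing chords $v_hv_e,v_gv_f$ with $|g-h|+|e-f|$ minimum. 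Your proposal has no counterpart to either device, and likewise nothing corresponding to the paper's Case 1, where the chords at $v_1$ and $v_n$ interleave and the analysis runs over a theta-subgraph of three internally disjoint $v_iv_j$-paths with possible edges between them. As written, the argument settles only the easy half of the problem.
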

\begin{proof}
Let $G$ be a traceable cubic graph and $P : v_1, \dots, v_n$ be a Hamiltonian path in $G$. By \cite[Theorem 9]{akbari}, Conjecture A holds for $v_1 v_n  \in E(G)$. Thus we can assume that $v_1 v_n  \notin E(G)$. Let $v_1 v_j, v_1 v_{j'}, v_i v_n, v_{i'} v_n \in E(G)\setminus E(P)$  and $j' < j < n$, $1 < i < i'$. Two cases can occur:
\begin{enumerate}[leftmargin=0pt,label=]
\item
\textbf{Case 1.}
Assume that $i < j$. Consider the following graph in Figure \ref{fig:overlapping} in which the thick edges denote the path $P$. Call the three paths between $v_j$ and $v_i$, from the left to the right, by $P_1$, $P_2$ and $P_3$, respectively (note that $P_1$ contains the edge $e'$ and $P_3$ contains the edge $e$).

\begin{figure}[H]
  \begin{center}
    \includegraphics[width=40mm]{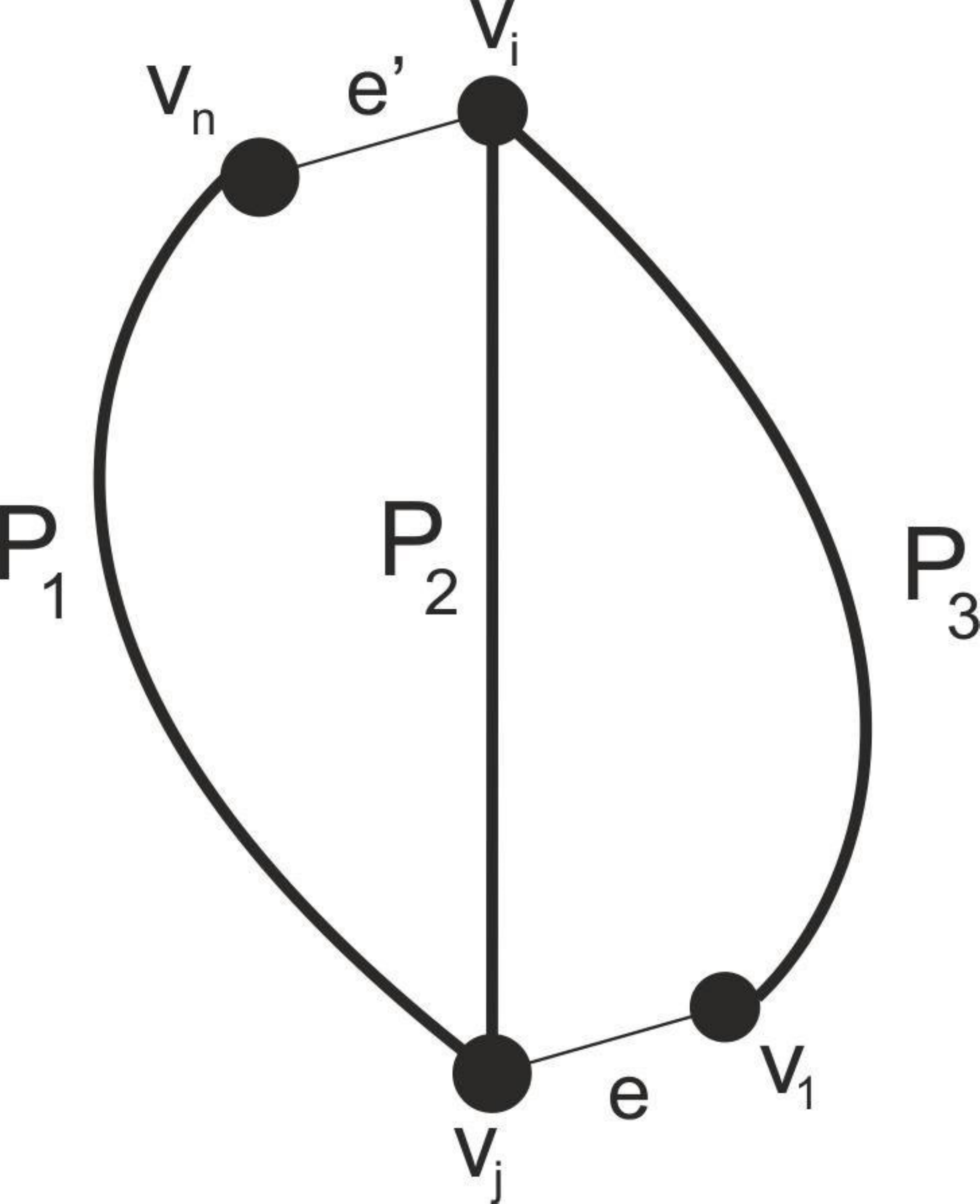}
    \caption{Paths $P_1$, $P_2$ and $P_3$}
    \label{fig:overlapping}
  \end{center}
\end{figure}

If $P_2$ has order $2$, then $G$ is Hamiltonian and so by \cite[Theorem 9]{akbari} Conjecture \theconjecture$\,$ holds. Thus we can assume that $P_1$, $P_2$ and $P_3$ have order at least $3$. Now, consider the following subcases:\\

\begin{enumerate}[leftmargin=0pt,label=]
\label{case:1}
\item \textbf{Subcase 1.} There is no edge between $V(P_r)$ and $V(P_s)$ for $1 \leq r < s \leq 3$. Since every vertex of $P_i$ has degree 3 for every $i$, by \lemmaTwo$\,$ there are two chordless cycles $C_1$ and $C_2$ formed by $P_1$ and $P_2$, respectively.
Define a tree $T$ with the edge set
$$ E\Big(\langle V(G) \setminus \big(V(C_1) \cup V(C_2)\big) \rangle\Big) \bigcap \big(\bigcup_{i=1}^3 E(P_i)\big).$$
Now, apply \lemmaOne $\,$for the partition $\{T, C_1, C_2\}$.\\

\item \textbf{Subcase 2.}
\label{case:edge}
There exists at least one edge between some $P_r$ and $P_s$, $r<s$. With no loss of generality, assume that $r=1$ and $s=2$. Suppose that $ab \in E(G)$, where $a \in V(P_1)$, $b \in V(P_2)$ and $d_{P_1}(v_j, a) + d_{P_2}(v_j, b)$ is minimum.

\begin{figure}[H]
  \begin{center}
    \includegraphics[width=40mm]{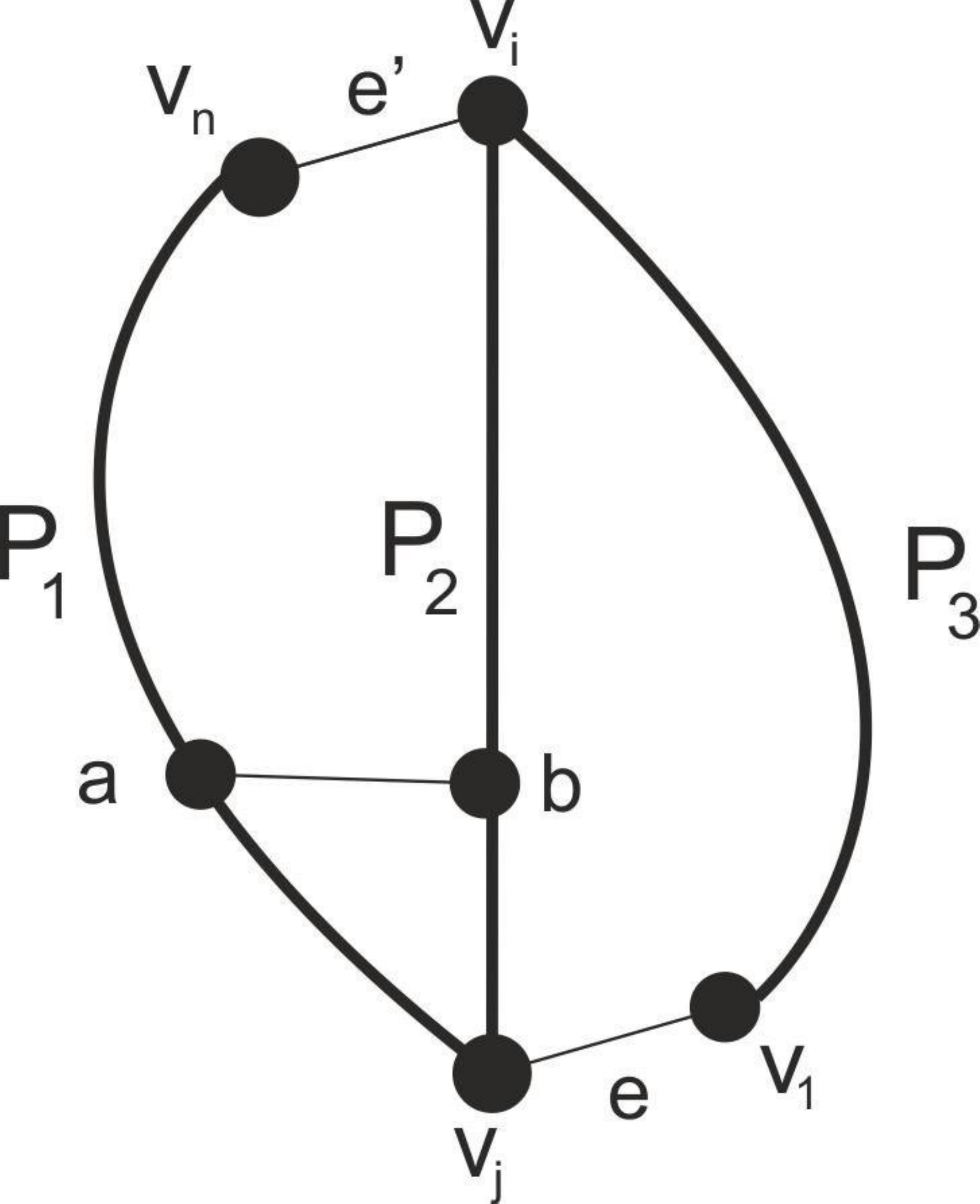}
    \caption{The edge $ab$ between $P_1$ and $P_2$}
    \label{fig:ab}
  \end{center}
\end{figure}

Three cases occur: \\

(a) There is no chordless cycle formed by either of the paths $\stackrel\frown{v_j a}$ or $\stackrel\frown{v_j b}$. Let $C$ be the chordless cycle $\stackrel\frown{v_j a}\stackrel\frown{ b v_j}$. Define $T$ with the edge set
$$ E\Big(\langle V(G) \setminus V(C)\rangle\Big) \bigcap \big(\bigcup_{i=1}^3 E(P_i)\big).$$
Now, apply \lemmaOne $\,$for the partition $\{T,C\}$.	\\

(b) There are two chordless cycles, say $C_1$ and $C_2$, respectively formed by the paths $\stackrel\frown{v_j a}$ and $\stackrel\frown{v_j b}$. Now, consider the partition $C_1$, $C_2$ and the tree induced on the following edges,
$$E\Big(\langle V(G) \setminus \big(V(C_1) \cup V(C_2)\big) \rangle\Big) \; \bigcap \; E\Big(\bigcup_{i=1}^3 P_i\Big),$$
and apply \lemmaOne.\\

(c) With no loss of generality, there exists a chordless cycle formed by the path $\stackrel\frown{v_j a}$ and there is no chordless cycle formed by the path $\stackrel\frown{v_j b}$.
First, suppose that for every chordless cycle $C_t$ on $\stackrel\frown{v_j a}$, at least one of the vertices of $C_t$ is adjacent to a vertex in $V(G) \setminus V(P_1)$.
We call one of the edges with one end in $C_t$ and other endpoint in $V(G) \setminus V(P_1)$ by $e_t$. Let $v_j=w_0, w_1, \dots, w_l=a$ be all vertices of the path $\stackrel\frown{v_j a}$ in $P_1$. Choose the shortest path $w_0 w_{i_1} w_{i_2} \dots w_l$ such that $0 < i_1 < i_2 < \dots < l$.
Define a tree $T$ whose edge set is the thin edges in Figure \ref{fig:deltaCycle}.\\
Call the cycle $w_0 w_{i_1} \dots w_l \stackrel\frown{b w_0}$ by $C'$. Now, by removing $C'$, $q$ vertex disjoint paths $Q_1, \dots, Q_q$ which are contained in $\stackrel\frown{v_j a}$ remain. Note that there exists a path of order $2$ in $C'$ which by adding this path to $Q_i$ we find a cycle $C_{t_i}$, for some $i$. Hence there exists an edge $e_{t_i}$ connecting $Q_i$ to $V(G) \setminus V(P_1)$. Now, we define a tree $T$ whose the edge set is,
$$\quad\quad\quad \bigg( E\Big(\langle V(G) \setminus V(C') \rangle \Big)\; \bigcap \; \Big(\bigcup_{i=1}^3 E(P_i)\Big) \bigg) \bigcup \Big(\big\{e_{t_i} \mid 1 \leq i \leq q \big\} \Big).$$
Apply \lemmaOne $\,$for the partition $\{T,C'\}$.\\

\begin{figure}[H]
  \begin{center}
    \includegraphics[width=40mm]{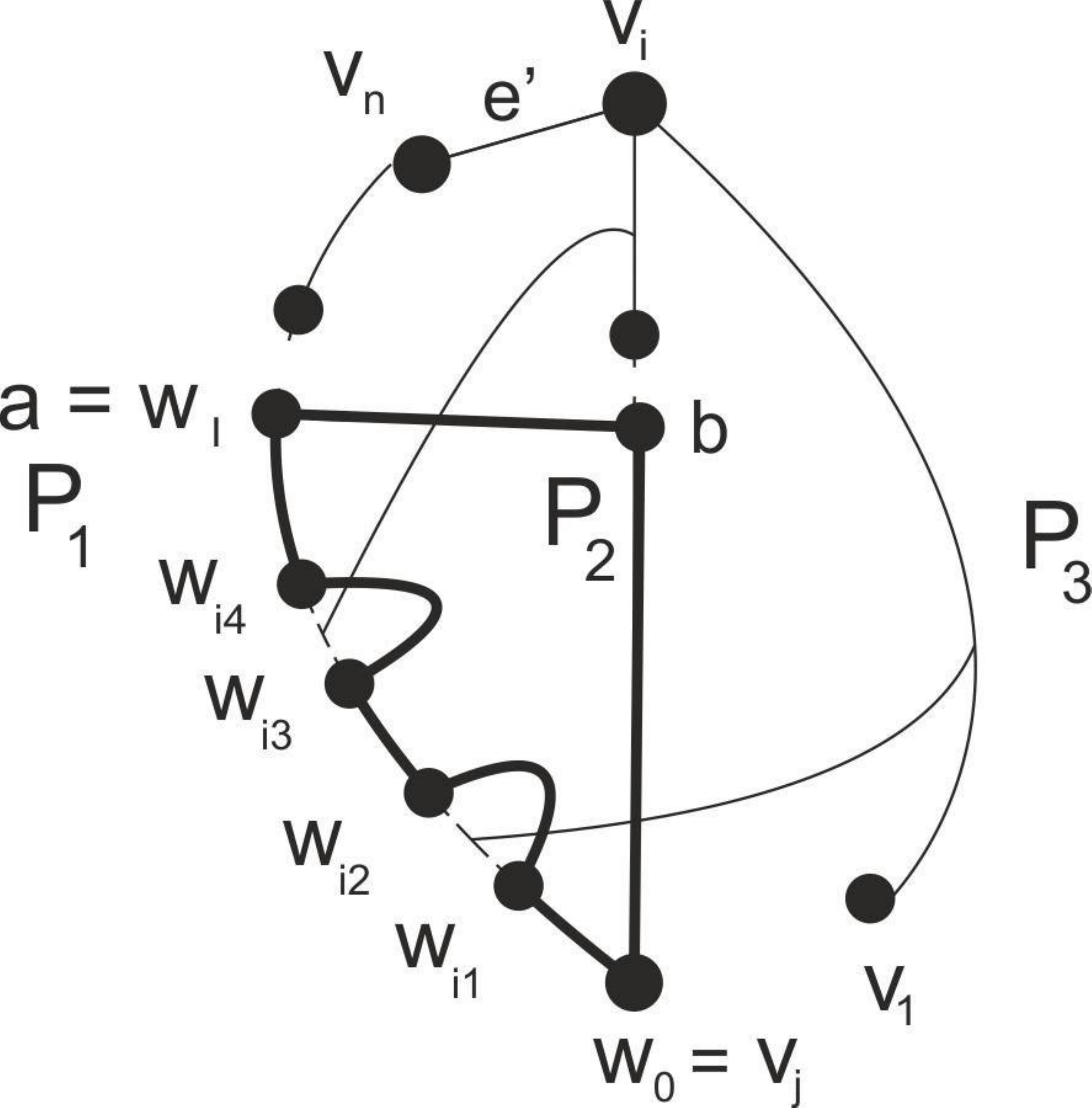}
    \caption{The cycle $C'$ and the tree $T$}
       \label{fig:deltaCycle}
  \end{center}
\end{figure}

Next, assume that there exists a cycle $C_1$ formed by $\stackrel\frown{v_j a}$ such that none of the vertices of $C_1$ is adjacent to $V(G) \setminus V(P_1)$. Choose the smallest cycle with this property. Obviously, this cycle is chordless. Now, three cases can be considered:\\

\begin{enumerate}[leftmargin=5pt,label=(\roman*)]
\item There exists a cycle $C_2$ formed by $P_2$ or $P_3$. Define the partition $C_1$, $C_2$ and a tree with the following edge set,
$$E\Big(\langle V(G) \setminus \big(V(C_1) \cup V(C_2)\big)\rangle \Big) \bigcap \Big( \bigcup_{i=1}^3 E(P_i) \Big),$$
and apply \lemmaOne.\\

\item There is no chordless cycle formed by $P_2$ and by $P_3$, and there is at least one edge between $V(P_2)$ and $V(P_3)$. Let $ab \in E(G)$, $a \in V(P_2)$ and $b \in V(P_3)$ and moreover $d_{P_2}(v_j, a) + d_{P_3}(v_j,b)$ is minimum. Notice that the cycle $\stackrel\frown{v_j a} \stackrel\frown{b v_j}$ is chordless. Let us call this cycle by $C_2$. Now, define the partition $C_2$ and a tree with the following edge set,
$$E\Big(\langle V(G) \setminus V(C_2)\rangle \Big) \bigcap \Big( \bigcup_{i=1}^3 E(P_i) \Big),$$
and apply \lemmaOne.\\

\item There is no chordless cycle formed by $P_2$ and by $P_3$, and there is no edge between $V(P_2)$ and $V(P_3)$. Let $C_2$ be the cycle consisting of two paths $P_2$ and $P_3$. Define the partition $C_2$ and a tree with the following edge set,
$$E\Big(\langle V(G) \setminus V(C_2)\rangle \Big) \bigcap \Big( \bigcup_{i=1}^3 E(P_i) \Big),$$
and apply \lemmaOne.

\end{enumerate}

\end{enumerate}

\vspace{5mm}
\item
\textbf{Case 2.}
\label{case:2}
Assume that $j < i$ for all Hamiltonian paths. Among all Hamiltonian paths consider the  path such that $i'-j'$ is maximum. Now, three cases can be considered:\\

\begin{enumerate}[leftmargin=0pt,label=]
\item \textbf{Subcase 1.} There is no $s < j'$ and $t > i'$ such that $v_s v_t \in E(G)$. By \lemmaTwo $\,$ there are two chordless cycles $C_1$ and $C_2$, respectively formed by the paths $v_1 v_{j'}$ and $v_{i'} v_n$. By assumption there is no edge $xy$, where $x \in V(C_1)$ and $y \in V(C_2)$.
Define a tree $T$ with the edge set:
$$ E\Big(\langle V(G) \setminus \big(V(C_1) \cup V(C_2)\big) \rangle \Big) \bigcap \Big( E(P) \cup \{v_{i'}v_n, v_{j'}v_1\} \Big).$$
Now, apply \lemmaOne $\,$for the partition $\{T, C_1, C_2\}$.\\

\item \textbf{Subcase 2.}
\label{subcase:22} There are at least four indices $s, s' < j$ and $t, t' > i$ such that $v_s v_t, v_{s'} v_{t'} \in E(G)$. Choose four indices $g, h < j$ and $e, f > i$ such that $v_h v_e, v_g v_f \in E(G)$ and $|g-h| + |e-f|$ is minimum.

\begin{figure}[H]
  \begin{center}
    \includegraphics[width=90mm]{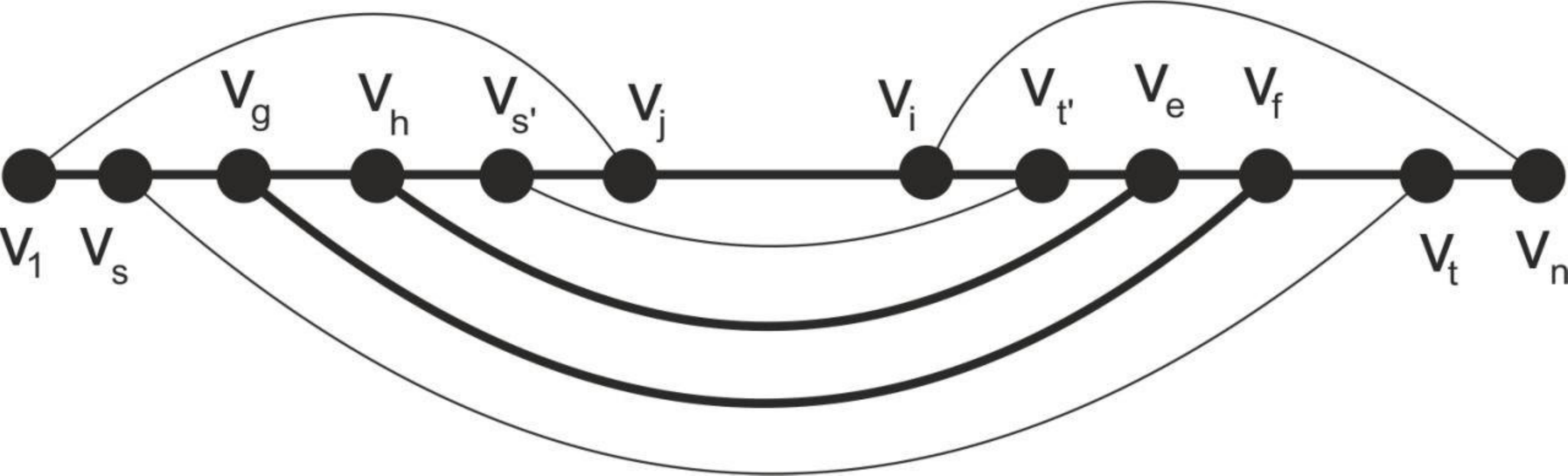}
    \caption{Two edges $v_h v_e$ and $v_g v_f$}
    \label{fig:non-overlapping}
  \end{center}
\end{figure}

Three cases can be considered:\\

\begin{enumerate}[leftmargin=0pt,label=(\alph*)]
\item There is no chordless cycle formed by $\stackrel\frown{v_g v_h}$ and by $\stackrel\frown{v_e v_f}$.

Consider the cycle $\stackrel\frown{v_g v_h} \stackrel\frown{v_e v_f}v_g$ and call it $C$. Now, define a tree $T$ with the edge set,
$$\,\,\,E\Big(\langle V(G) \setminus V(C)\rangle \Big) \bigcap \Big( E(P) \cup \{v_1v_{j}, v_{i}v_n\} \Big),$$
apply \lemmaOne $\,$for the partition $\{T, C\}$.\\

\item With no loss of generality, there exists a chordless cycle formed by $\stackrel\frown{v_e v_f}$ and there is no chordless cycle formed by the path $\stackrel\frown{v_g v_h}$. First suppose that there is a chordless cycle $C_1$ formed by $\stackrel\frown{v_e v_f}$ such that there is no edge between $V(C_1)$ and $\{v_1, \dots, v_j\}$. By \lemmaTwo $,$ there exists a chordless cycle $C_2$ formed by $\stackrel\frown{v_1 v_j}$. By assumption there is no edge between $V(C_1)$ and $V(C_2)$. Now, define a tree $T$ with the edge set,

$$\quad\quad\quad\quad E\Big(\langle V(G) \setminus \big(V(C_1) \cup V(C_2)\big)\rangle \Big) \bigcap \Big( E(P) \cup \{v_1v_{j}, v_{i}v_n\} \Big),$$

and apply \lemmaOne $\,$for the partition $\{T, C_1, C_2\}$.

$\;$ Next assume that for every cycle $C_r$ formed by $\stackrel\frown{v_e v_f}$, there are two vertices $x_r \in V(C_r)$ and $y_r \in \{v_1, \dots, v_j\}$ such that $x_r y_r \in E(G)$. Let $v_e=w_0, w_1, \dots, w_l=v_f$ be all vertices of the path $\stackrel\frown{v_e v_f}$ in $P$. Choose the shortest path $w_0 w_{i_1} w_{i_2} \dots w_l$ such that $0 < i_1 < i_2 < \dots < l$. Consider the cycle $w_0 w_{i_1} \dots w_l \stackrel\frown{v_g v_h}$ and call it $C$. Now, by removing $C$, $q$ vertex disjoint paths $Q_1, \dots, Q_q$ which are contained in $\stackrel\frown{v_e v_f}$ remain. Note that there exists a path of order $2$ in $C$ which by adding this path to $Q_i$ we find a cycle $C_{r_i}$, for some $i$. Hence there exists an edge $x_{r_i} y_{r_i}$ connecting $Q_i$ to $V(G) \setminus V(\stackrel\frown{v_e v_f})$. We define a tree $T$ whose edge set is the edges,
$$\quad\quad\quad\quad\quad\quad E\Big(\langle V(G) \setminus V(C)\rangle \Big) \bigcap \Big( E(P) \cup \{v_1v_{j}, v_{i}v_n\} \cup \big\{x_{r_i} y_{r_i} \mid 1 \leq i \leq q\big\} \Big),$$
then apply \lemmaOne $\,$ on the partition $\{T, C\}$.\\
\begin{figure}[H]
  \begin{center}
    \includegraphics[width=90mm]{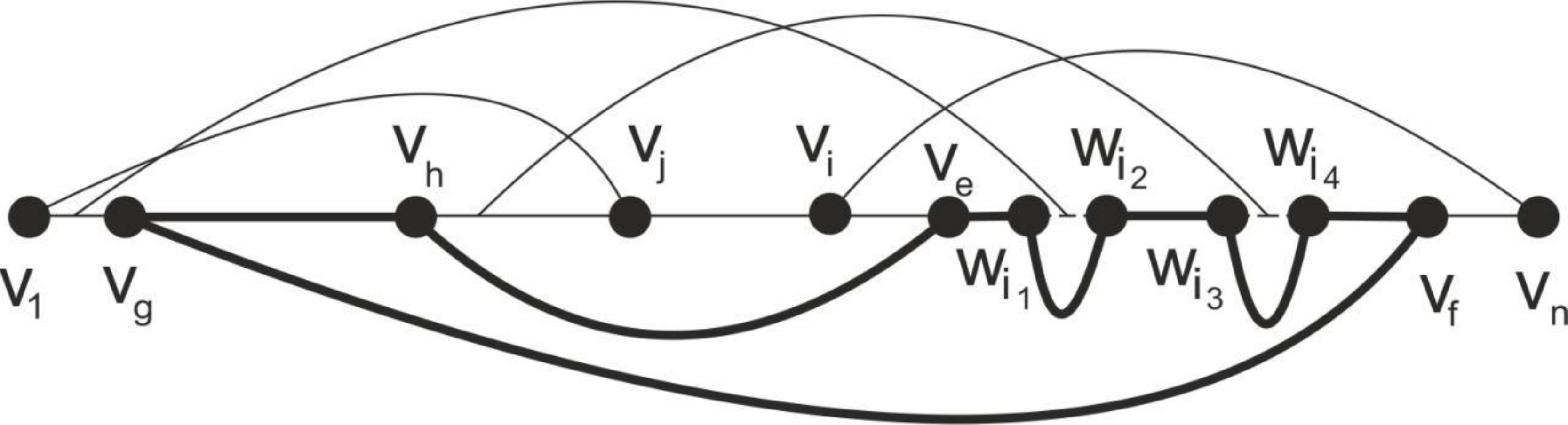}
    \caption{The tree $T$ and the shortest path $w_0 w_{i_1}\dots w_l$}
    \label{fig:delta-non-overlapping}
  \end{center}
\end{figure}

\item There are at least two chordless cycles, say $C_1$ and $C_2$ formed by the paths $\stackrel\frown{v_g v_h}$ and $\stackrel\frown{v_e v_f}$, respectively. Since $|g-h| + |e-f|$ is minimum, there is no edge $xy \in E(G)$ with $x \in V(C_1)$ and $y \in V(C_2)$. Now, define a tree $T$ with the edge set,
$$\quad\quad\quad\quad E\Big( \langle V(G) \setminus \big(V(C_1) \cup V(C_2)\big) \rangle \Big) \bigcap \Big( E(P) \cup \{v_1 v_{j}, v_{i}v_n\} \Big),$$
and apply \lemmaOne $\,$for the partition $\{T, C_1, C_2\}$.\\
\end{enumerate}

\item \textbf{Subcase 3.} There exist exactly two indices $s,t$,  $s < j' < i' < t$ such that $v_s v_t \in E(G)$ and there are no two other indices $s', t'$ such that $s' < j < i < t'$ and $v_{s'} v_{t'} \in E(G)$. We can assume that there is no cycle formed by  $\stackrel\frown{v_{s+1} v_j}$ or $\stackrel\frown{v_i v_{t-1}}$, to see this by symmetry consider a cycle $C$ formed by $\stackrel\frown{v_{s+1} v_j}$. By \lemmaTwo $\,$ there exist chordless cycles $C_1$ formed by $\stackrel\frown{v_{s+1} v_j}$ and $C_2$ formed by $\stackrel\frown{v_{i} v_n}$. By assumption $v_s v_t$ is the only edge such that $s < j$ and $t > i \;$. Therefore,  there is no edge between $V(C_1)$ and  $V(C_2)$. Now, let $T$ be a tree defined by the edge set,
$$ E\Big(\langle V(G) \setminus \big(V(C_1) \cup V(C_2)\big)\rangle \Big) \bigcap \Big( E(P) \cup \{v_1v_{j}, v_{i}v_n\} \Big),$$
and apply \lemmaOne $\,$for the partition \{$T$, $C_1$, $C_2$\}.\\

$\quad$Furthermore, we can also assume that either $s \neq j'-1$  or $t \neq i'+1$, otherwise we have the Hamiltonian cycle $\stackrel\frown{v_1 v_s} \stackrel\frown{v_t v_n} \stackrel\frown{v_{i'} v_{j'}} v_1$ and by \cite[Theorem 9]{akbari} Conjecture \theconjecture$\,$ holds.

$\quad$By symmetry, suppose that $s \neq j'-1$. Let $v_k$ be the vertex adjacent to $v_{j'-1}$, and $k \notin \{j'-2, j'\}$. It can be shown that $k > j'-1$, since otherwise by considering the Hamiltonian path $P': \; \stackrel\frown{ v_{k+1} v_{j'-1}}\stackrel\frown{v_k v_1} \stackrel\frown{v_{j'} v_n}$,  the new $i'-j'$ is greater than the old one and this contradicts our assumption about $P$ in the \hyperref[case:2]{Case 2}.

$\quad$We know that $j' < k < i$. Moreover, the fact that  $\stackrel\frown{v_{s+1} v_j}$ does not form a cycle contradicts the case that $j' < k \le j$. So $j < k < i$. Consider two cycles $C_1$ and $C_2$, respectively with the vertices $v_1 \stackrel\frown{v_{j'} v_{j}} v_1$ and $v_n \stackrel\frown{v_{i'} v_{i}} v_n$. The cycles $C_1$ and $C_2$ are chordless, otherwise there exist cycles formed by the paths $\stackrel\frown{v_{s+1} v_j}$ or $\stackrel\frown{v_i v_{t-1}}$. Now, define a tree $T$ with the edge set
$$ E\Big(\langle V(G) \setminus \big(V(C_1) \cup V(C_2)\big)\rangle \Big) \bigcap \Big( E(P) \cup \{v_s v_t, v_k v_{j'-1}\} \Big),$$
and apply \lemmaOne $\,$for the partition \{$T$, $C_1$, $C_2$\}.
\end{enumerate}
\end{enumerate}
\end{proof}

\noindent\textbf{Remark 2.}
\label{remark:2}
Indeed, in the proof of the previous theorem we showed a stronger result, that is, for every traceable cubic graph there is a decomposition with at most two cycles.

\end{document}